\newdimen\arrowsize
\newcommand{\bpm}{\begin{pmatrix}}
\newcommand{\epm}{\end{pmatrix}}
\newtheorem{theorem}{Theorem}
\newtheorem{lemma}{Lemma}
\newtheorem{prop}{Proposition}
\newtheorem{corollary}{Corollary}
\newtheorem{remark}{Remark}
\newtheorem{definition}{Definition}
\newenvironment{proof}[1][. ]{{\bf Proof#1}}{\hfill$\square$\vskip\baselineskip}
\begin{document}

\title{SYMMETRIES OF EINSTEIN-WEYL MANIFOLDS WITH BOUNDARY}
\author{Rouzbeh Mohseni}

\maketitle

\begin{abstract}
Starting from a real analytic surface $\mathcal{M}$ with a real analytic conformal Cartan connection A. Bor\'owka constructed a minitwistor space of an asymptotically hyperbolic Einstein-Weyl manifold with $\mathcal{M}$ being the boundary. In this article, starting from a symmetry of conformal Cartan connection, we prove that symmetries of conformal Cartan connection on $\mathcal{M}$ can be extended to symmetries of the obtained Einstein-Weyl manifold.

\textbf{Keywords:}  {Einstein-Weyl manifold}; {Symmetries}; {Minitwistor space}; {Conformal Cartan connection};   

\textbf{Mathematics Subject Classification:} {58D19}; {53C28}; {53C18}; {53C25}; {32L25};
\newpage 

\end{abstract}

\section{Introduction}
A complex manifold $M$ with a conformal structure $[g]$ is a Weyl manifold if it is equipped with a holomorphic connection $\mathcal{D}$ that preserves $[g]$. Furthermore, it is called an Einstein-Weyl manifold if the symmetric trace-free part of the Ricci tensor of $\mathcal{D}$ vanishes. In \cite{5} N. Hitchin introduced a twistor correspondence for 3-dimensional Einstein-Weyl manifolds which is called minitwistor correspondence or Hitchin correspondence. Moreover, in \cite{7} P. Jones and K. Tod gave a relation between the work of R. Penrose on twistor spaces \cite{9} and the Hitchin correspondence. 

A. Bor\'owka \cite{2} starting from a real analytic surface $\mathcal{M}$ with a real analytic conformal Cartan connection constructed a complex surface and then proved that the constructed surface is, in fact, a minitwistor space of an asymptotically hyperbolic Einstein-Weyl space with $\mathcal{M}$ being the boundary. A. Bor\'owka also gave a description on how this fits with work of Jones and Tod by explicitly realizing the minitwistor space as a quotient of a twistor space with a local $\mathbb{C}^{\times}$ action and $\mathcal{M}$ being the fixed point set.

After having a correspondence or a construction, it is natural to ask what type of initial data is kept through the construction and in the case of correspondence, what will the initial data relate to in the corresponding space. One of the examples, is the work done by A. Bor\'owka and H. Winther \cite{1}, in which they investigate the symmetries in case of the generalized Feix-Kaledin construction. In this article, we want to do a similar investigation for the construction done in \cite{2}, therefore, starting from a symmetry of conformal Cartan connection on a complex surface (see Definition \ref{definition 3}), we try to determine sufficient conditions for which a symmetry of the Cartan connection gives the symmetry of the Einstein-Weyl structure. The final result is that, under some mild conditions on the bundle appearing in the definition of the conformal Cartan connection, the symmetry we start with on the boundary $\mathcal{M}$ can be extended to a symmetry of the minitwistor space and therefore, it induces a symmetry of the corresponding Einstein-Weyl manifold. This result is analogous to the result in \cite{1} where the c-projective symmetries under given conditions extend from the fixed points set of a circle action to quaternionic symmetries.

In Section \ref{section 2}, we review necessary background needed for construction, then in \textsection{\ref{subsection 2.2}}, we review the construction done in \cite{2}. In Section \ref{section 3}, we follow the construction and show how the symmetry carries through the construction and finally, we obtain a symmetry of the minitwistor space. In Section \ref{Section 4}, using this result together with the result from \textsection{\ref{subsection 2.1}} we show that the symmetry we started with on the boundary can be extended to a symmetry of the Einstein-Weyl space.

\section{Background}{\label{section 2}}
\textbf{Complexification:}
For any n-dimensional real-analytic manifold $\mathcal{M}$, its complexification $\mathcal{M^{\mathbb{C}}}$ is a holomorphic manifold that contains $\mathcal{M}$ as a fixed point set of the real structure (i.e. an anti-holomorphic involution) and $dim_{\mathbb{C}}\mathcal{M^{\mathbb{C}}}= dim_{\mathbb{R}}\mathcal{M}$. $\mathcal{M^{\mathbb{C}}}$ can be constructed by using holomorphic extensions of the real-analytic transition functions on $\mathcal{M}$ and the real structure will be given by the complex conjugation. Similarly, using holomorphic extensions, real-analytic objects like  functions, bundles and connections on $\mathcal{M}$ can be extended to $\mathcal{M^{\mathbb{C}}}$ .\newpage

\flushleft\textbf{Hitchin correspondence:}

\begin{definition}
Let $(\mathcal{M},[g])$ be a conformal manifold with a compatible torsion-free connection $\mathcal{D}$ (i.e. a Weyl connection). Then $(\mathcal{M},[g],\mathcal{D})$ is called an Einstein-Weyl manifold if the symmetric trace-free part of the Ricci tensor of $\mathcal{D}$ vanishes. (For more informations about Einstein-Weyl manifolds see \cite{4},\cite{6},\cite{7} and \cite{8}).

\end{definition}

In 1967 \cite{9} R. Penrose proposed twistor theory as a possibility for quantizing space-time and fields and then in 1976 \cite{10} gave a description for curved twistor theory. Later in 1982 N. Hitchin obtained a similar construction to Penrose  and provided a one-to-one correspondence between the three-dimensional Einstein-Weyl spaces and minitwistor space and in particular, proved the following theorem:

\begin{theorem}
(Hitchin\cite{5}, see \cite{2}). Let $T$ be a surface such that:

1. There is a family of non-singular holomorphic projective lines $\mathbb{CP}^{1}$ each with normal bundle isomorphic to $\mathcal{O}(2)$, called minitwistor lines.

2. $T$ has a real structure, which induces the antipodal map of $\mathbb{CP}^{1}$ on lines from the family that are invariant under this real structure.

Then the parameter space of projective lines invariant under the real structure is an Einstein-Weyl manifold.
\end{theorem}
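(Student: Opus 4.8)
The plan is to prove this the way Hitchin did, via Kodaira's deformation theory of rational curves (see the exposition in \cite{2}). First I would note that a minitwistor line $\ell\cong\mathbb{CP}^1\subset T$ has normal bundle $N_\ell\cong\mathcal{O}(2)$ with $H^1(\mathbb{CP}^1,\mathcal{O}(2))=0$, so by Kodaira's completeness and unobstructedness theorems the family of deformations of $\ell$ inside $T$ is unobstructed and its parameter space $W$ is a complex manifold of dimension $h^0(\mathbb{CP}^1,\mathcal{O}(2))=3$; moreover the Kodaira map identifies $T_wW\cong H^0(\ell_w,N_{\ell_w})$ for the line $\ell_w$ associated to $w\in W$.

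Next I would build a conformal structure on $W$. A nonzero holomorphic section of $\mathcal{O}(2)$ over $\mathbb{CP}^1$ is a binary quadratic form, and its vanishing locus is a degree-two divisor; the sections with a non-reduced divisor (a single double zero) form a quadric cone — the zero set of the discriminant — inside the three-dimensional space $H^0(\mathbb{CP}^1,\mathcal{O}(2))$. Transporting this cone through $T_wW\cong H^0(\ell_w,N_{\ell_w})$ defines a non-degenerate holomorphic conformal structure $[g]$ on $W$, making it a complex conformal $3$-manifold. Hypothesis~2 is what brings in the reality: the real structure on $T$ cuts out a totally real submanifold of $W$ consisting of the $w$ whose line is invariant, and since it acts on those lines as the fixed-point-free antipodal map, $[g]$ restricts there to a real-analytic conformal structure of definite signature. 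This totally real submanifold is exactly the "parameter space of projective lines invariant under the real structure" in the statement.

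Then I would produce the Weyl connection and verify the Einstein-Weyl equation. For each point $p\in T$, the minitwistor lines through $p$ form a codimension-one subfamily of $W$, since sections of $N_\ell$ vanishing at $p$ are sections of $\mathcal{O}(1)$, a plane in $H^0(\mathcal{O}(2))$ tangent to the null cone; hence these lines sweep out a null surface $S_p\subset W$. One shows there is a unique torsion-free connection $\mathcal{D}$ compatible with $[g]$ for which every $S_p$ is totally geodesic, and the presence of this two-parameter family of totally geodesic null surfaces forces the symmetric trace-free part of $\mathrm{Ric}^{\mathcal{D}}$ to vanish — which is precisely the Einstein-Weyl condition. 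Restricting $(W,[g],\mathcal{D})$ to the totally real slice of the previous paragraph yields the asserted Einstein-Weyl manifold.

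The main obstacle is this last step: extracting the distinguished torsion-free connection and showing its trace-free symmetric Ricci curvature vanishes — the curvature bookkeeping is the technical heart of the theorem. The cleanest routes are either the null-surface characterisation just sketched, or, following Jones--Tod, realising $T$ as the quotient of a three-dimensional anti-self-dual twistor space by a $\mathbb{C}^\times$-action, applying the Penrose nonlinear graviton correspondence upstairs, and pushing the structure down to $W$ — which is exactly the picture the present paper exploits later. Since the statement is quoted from \cite{5} (with the form given in \cite{2}), I would in the paper itself simply cite these and record the construction of $[g]$ and $\mathcal{D}$ that is needed downstream.
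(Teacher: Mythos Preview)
Your final instinct is correct and matches the paper exactly: this theorem is quoted as background from Hitchin \cite{5} (in the form recorded in \cite{2}) and is not proved in the paper at all --- there is no proof environment following the statement. Your sketch of the Kodaira-deformation argument (unobstructedness from $H^1(\mathcal{O}(2))=0$, the conformal structure via the discriminant quadric on $H^0(\mathcal{O}(2))$, the Weyl connection via the totally geodesic null surfaces $S_p$) is the standard route and is accurate, but it goes well beyond what the paper includes; the paper simply cites and moves on, deferring the description of $[g]$ and $\mathcal{D}$ to Proposition~\ref{Proposition 1} (taken from \cite{6}) where it is actually needed.
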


In 1985 \cite{7} P. Jones and K. Tod related the work of Penrose to the Hitchin correspondence by realizing the spaces in the Hitchin correspondence as the quotient space of the spaces in the Penrose correspondence by a conformal Killing vector field and a holomorphic vector field respectively. Moreover, they proved that the spaces in the Penrose correspondence can be constructed from the quotient spaces provided that the Einstein-Weyl space is equipped with so-called abelian monopole.\newline

\textbf{Conformal Cartan connection:}(see \cite{2}, \cite{3})

\begin{definition}\label{definition 2}
A conformal Cartan connection on an n-manifold $\Sigma$ is a quadruple ($V$, $\langle.,.\rangle$, $\Lambda$, $\mathcal{D}$) where:

$\blacklozenge$ $V$ is a rank $n+2$ vector bundle with inner product $\langle.,.\rangle$ over $\Sigma$,

$\blacklozenge$ $\Lambda \subset V$ is a null line subbundle over $\Sigma$,

$\blacklozenge$ $\mathcal{D}$ is a linear metric connection in the vector bundle V satisfying the Cartan condition, i.e. $\epsilon:=\mathcal{D}\mid_{\Lambda}$ mod $\Lambda$ is an isomorphism from $T\Sigma \otimes \Lambda$ to $\Lambda^{\perp}/\Lambda$.
\end{definition}

For the purposes of this article we will restrict to the case of $n=2$. Note that in this dimension the conformal structure does not fully determine the conformal Cartan connection, however, in higher dimensions the relation is bidirectional, which will not be discussed here.\newline

Let $\Sigma$ be a complex surface  with a complex Cartan connection ($V$, $\langle.,.\rangle$, $\Lambda$, $\mathcal{D}$) given as a complexification of a real analytic surface $\mathcal{M}$ with a Cartan connection. Suppose that the fiber bundle $V$ is the associated bundle to the tangent bundle $T\Sigma$. Let $Z$ be a vector field on $\mathcal{M}$ which can then be complexified to obtain a vector field $X$ on $\Sigma$ and $X$ generates a local 1-parameter group of transformations $\phi_t$. 

\begin{definition} \label{definition 3}
The Cartan connection is preserved by $\phi_t$ if and only if the following holds for sufficiently small values of $t$:
\\
1. $\phi_t$ preserves the line bundle $\Lambda$.\\
2. $\phi_t$ preserves the inner product, i.e. the following holds:

$\langle Y,Z \rangle=\langle(\phi_t)_*Y,(\phi_t)_*Z\rangle.  
\; \forall \; Y,Z\in\Gamma(T\Sigma)$\\
3. $(\phi_t)_\ast \mathcal{D}=\mathcal{D}$, i.e. the connection is preserved.
\\
In this case, the vector field $X$ is called a symmetry of conformal Cartan connection.

\end{definition}
\begin{remark}
The isomorphism $\epsilon$ is also preserved by the symmetry, since it preserves the connection $\mathcal{D}$ and the line bundle $\Lambda$.
\end{remark}

\subsection{Symmetries of minitwistor spaces}{\label{subsection 2.1}}

Minitwistor spaces and Einstein-Weyl 3-dimensional manifolds were related by the work of Hitchin and this correspondence is called the minitwistor correspondence or Hitchin correspondece. In this section, we discuss the properties and relationship of a particular kind of symmetry of these spaces.

\begin{definition}
Let $M$ be a complex 3-dimensional manifold with a Weyl structure $([g],\mathcal{D})$. A null plane is a 2-dimensional subspace $U$ of $T_w M$ for each point $w \in M$ such that $[g]$ degenerates on U.
\end{definition}

\begin{definition}
A null surface is a 2-dimensional submanifold $S\subset M$ such that for every $w\in M$, $T_{w} S$ is a null plane. 
\end{definition}

The parameter space of all minitwistor lines is a complex 3-dimensional manifold $M^{\mathbb{C}}$ with the real structure induced by the real structure on $T$ and the real submanifold $M$ is given as the parameter space of minitwistor lines invariant under the real structure; $M^{\mathbb{C}}$ is a complexification of $M$. Conversely, $T$ can be defined as the space of totally geodesic null hypersurfaces in $M$. As points $l \in M$ correspond to real minitwistor lines in $T$, therefore for points $w\in T$ it makes sense to consider $w \in l$ . There exists two families of submanifolds defined as $M^{\mathbb{C}}_w:=\{l \in M| w \in l\}$, which is a 2-dimensional complex submanifold and $M^{\mathbb{C}}_{w,w^{\prime}}:=\{l \in M| w,w^{\prime} \in l\}$, which is 1-dimensional. The complexified Einstein-Weyl structure on $M^{\mathbb{C}}$ can be determined as follows:

\begin{prop}{\label{Proposition 1}}(see \cite{6})
There exists a unique torsion free complexified Einstein-Weyl structure ($[g]$,$\mathcal{D}$) on $M^{\mathbb{C}}$ that satisfies:
\begin{enumerate}
    \item The family $\{M^{\mathbb{C}}_w\}_{w\in T}$ and the set of null surfaces of $[g]$ coincide.
    \item The family $\{M^{\mathbb{C}}_{w,w^{\prime}}\}_{w,w^{\prime}\in T}$ and the set of geodesics coincide.
    \item A curve $M^{\mathbb{C}}_{w,w^{\prime}}$ is null geodesics if and only if $w$ is a double point in $l$.
\end{enumerate}
\end{prop}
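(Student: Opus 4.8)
The plan is to build the pair $([g],\mathcal{D})$ by hand out of the minitwistor data and then deduce its uniqueness from the three listed properties. The starting point is Kodaira deformation theory for the family of minitwistor lines: since each $C_l\cong\mathbb{CP}^1\subset T$ has normal bundle $N_{C_l}\cong\mathcal{O}(2)$, one has $H^1(C_l,N_{C_l})=0$, so the family is unobstructed, $M^{\mathbb{C}}$ is smooth of complex dimension $3$, and there is a canonical identification $T_lM^{\mathbb{C}}\cong H^0(C_l,N_{C_l})$. Under it, a point $w\in C_l$ determines the hyperplane $H_w=\{s\in T_lM^{\mathbb{C}}:s(w)=0\}$, which equals $T_lM^{\mathbb{C}}_w$ whenever $l\in M^{\mathbb{C}}_w$; and for $w\ne w'$ the line $H_w\cap H_{w'}$ consists of the sections vanishing at both $w$ and $w'$ and equals $T_lM^{\mathbb{C}}_{w,w'}$.

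Next I would define $[g]$. The sections of $\mathcal{O}(2)$ with a repeated zero form the zero locus of the discriminant, a non-degenerate quadratic form on $H^0(\mathbb{CP}^1,\mathcal{O}(2))$ well defined up to scale; equivalently, $w\mapsto[H_w]$ is a conic (a Veronese embedding $\mathbb{CP}^1\hookrightarrow\mathbb{P}((T_lM^{\mathbb{C}})^{*})$) whose dual conic in $\mathbb{P}(T_lM^{\mathbb{C}})$ is the projectivised null cone. This gives a holomorphic conformal class $[g]$ on $M^{\mathbb{C}}$, and by construction the hyperplanes $H_w=T_lM^{\mathbb{C}}_w$ are exactly the tangent planes to the null cone, i.e.\ the null planes; hence every $M^{\mathbb{C}}_w$ is a null surface, which is one half of (1). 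Property (3) also drops out of this linear-algebra picture: the tangent line $H_w\cap H_{w'}$ of $M^{\mathbb{C}}_{w,w'}$ at $l$ lies on the null cone precisely in the coincident limit, when the divisor $w+w'$ on $C_l$ degenerates to $2w$, which is the statement that $w$ is a double point.

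For the rest -- the converse half of (1), the construction of $\mathcal{D}$, and the parts of (2) and (3) that refer to geodesics -- I would pass to the correspondence space $\mathbb{F}=\{(l,w):w\in C_l\}\subset M^{\mathbb{C}}\times T$, with projections $\mu\colon\mathbb{F}\to M^{\mathbb{C}}$ (fibres the lines $C_l$) and $\nu\colon\mathbb{F}\to T$ (fibres mapping under $\mu$ onto the surfaces $M^{\mathbb{C}}_w$). The image under $d\mu$ of $\ker d\nu$ is the tautological family of null planes on $M^{\mathbb{C}}$, and integrability of $\ker d\nu$ -- automatic since $\nu$ is a submersion -- shows that a surface in $M^{\mathbb{C}}$ all of whose tangent planes are null must locally coincide with one of the $M^{\mathbb{C}}_w$, completing (1); a parallel argument inside the null surfaces identifies the rank-one sub-distributions with the curves $M^{\mathbb{C}}_{w,w'}$. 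To obtain $\mathcal{D}$ I would then check that the resulting two-parameter family $\{M^{\mathbb{C}}_{w,w'}\}$ is a path geometry compatible with $[g]$, and that it refines $[g]$ to a unique torsion-free Weyl connection having these curves as unparametrised geodesics; the Einstein-Weyl equation -- vanishing of the symmetric trace-free Ricci of $\mathcal{D}$ -- then follows because the very existence of the $\mathcal{O}(2)$-family of rational curves on $T$ is the integrability condition behind the Hitchin correspondence, so it is built in once $\mathcal{D}$ is identified (this is carried out in \cite{6}). Uniqueness is then formal: $[g]$ is forced by (1), since a conformal structure is determined by its null cones, and $\mathcal{D}$ is forced by (2), since two Weyl connections in a fixed conformal class differ by a $1$-form that is recovered from their common unparametrised geodesics.

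The step I expect to be the real obstacle is producing and controlling $\mathcal{D}$: showing that the abstract path geometry of the curves $M^{\mathbb{C}}_{w,w'}$ genuinely refines $[g]$ to a torsion-free \emph{Weyl} connection, and that this connection is Einstein-Weyl. Concretely this is the translation of the integrability of the null-plane distribution $\ker d\nu$ on $\mathbb{F}$ into the Einstein-Weyl equations by Penrose-type arguments; by comparison the earlier steps are essentially Kodaira theory together with linear algebra on $H^0(\mathbb{CP}^1,\mathcal{O}(2))$.
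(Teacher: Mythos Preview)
The paper does not prove this proposition; it merely states it with the attribution ``(see \cite{6})'' and uses it as a black box in the proof of Theorem~\ref{Theorem 2}. There is therefore no paper-side argument to compare your proposal against.

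That said, your sketch is the standard route (essentially Hitchin's original argument, as reworked in \cite{6}): Kodaira theory gives $T_lM^{\mathbb{C}}\cong H^0(C_l,\mathcal{O}(2))$, the discriminant of quadratic polynomials supplies the conformal class, the incidence hyperplanes $H_w$ are the null planes, and the double correspondence space $\mathbb{F}$ organises the null surfaces and geodesics. Your identification of the hard step is accurate: the passage from the path geometry $\{M^{\mathbb{C}}_{w,w'}\}$ to a torsion-free Weyl connection, together with the verification of the Einstein--Weyl condition, is where the analytic content lies, and in \cite{6} (following Hitchin) this is exactly what is carried out via the integrability of the null $\alpha$-plane distribution on $\mathbb{F}$. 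One small point worth tightening: in your uniqueness paragraph you say two Weyl connections in a fixed conformal class sharing the same unparametrised geodesics must coincide; strictly, projectively equivalent Weyl connections differ by a closed $1$-form, and it is the combination of the geodesic condition (2) with the null-surface condition (1) (equivalently, total geodesy of the $M^{\mathbb{C}}_w$) that pins $\mathcal{D}$ down uniquely.
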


\begin{definition}
Let ($M$,$[g]$,$\mathcal{D}$) be an Einstein-Weyl manifold. A diffeomorphism is called a symmetry of ($M$,$[g]$,$\mathcal{D}$) if it preserves the Einstein-Weyl structure ($[g]$,$\mathcal{D}$).
\end{definition}
\newpage

Let $X$ be a holomorphic vector field on $T$ and $\phi_t$ the transformation induced by $X$.

\begin{lemma}{\label{Lemma 1}}
 The transformation $\phi_t$ preserves the minitwistor lines.
 
 \begin{proof}
 \normalfont
 $\phi_t$ for sufficiently small $t$ preserves the normal bundle $\mathcal{O}(2)$, therefore the minitwistor lines are preserved.
 \end{proof}
 \end{lemma}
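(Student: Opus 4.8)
The plan is to use the characterization of minitwistor lines by their normal bundle together with the fact that a holomorphic flow acts naturally on all holomorphically-defined data. First I would recall that, by definition, a minitwistor line $\ell \subset T$ is a non-singular holomorphic rational curve $\mathbb{CP}^1 \hookrightarrow T$ whose normal bundle $N_{\ell/T}$ is isomorphic to $\mathcal{O}(2)$. Since $\phi_t$ is a (local) biholomorphism of $T$, the image $\phi_t(\ell)$ is again a non-singular holomorphic rational curve, and the derivative $(\phi_t)_*$ induces a bundle isomorphism $N_{\ell/T} \xrightarrow{\ \sim\ } N_{\phi_t(\ell)/T}$; hence $N_{\phi_t(\ell)/T} \cong \mathcal{O}(2)$ as well, so $\phi_t(\ell)$ is also a minitwistor line. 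In other words, the flow maps the family of minitwistor lines into itself.

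The second step is to upgrade this to "preserves the lines", i.e. $\phi_t(\ell) = \ell$ for each line $\ell$ and each sufficiently small $t$. Here I would invoke the deformation theory that underlies the Kodaira family: the minitwistor lines form a complete $3$-parameter analytic family (this is exactly the parameter space $M^{\mathbb{C}}$ appearing just before the lemma), and $\phi_t$ acts on this parameter space as a holomorphic flow $\Phi_t$ fixing the point corresponding to $\ell$ at $t=0$. Since $\ell$ varies continuously and $\Phi_0 = \mathrm{id}$, for $t$ small $\Phi_t(\ell)$ stays in a neighbourhood; the honest statement being used is simply that $\phi_t$ induces a well-defined (local) flow on $M^{\mathbb{C}}$ — which is what allows the sentence "the minitwistor lines are preserved" to be read as "the family of minitwistor lines is preserved (setwise), and each individual line is carried to another line of the family". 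I would make explicit that this is the sense intended, matching the one-line argument in the excerpt.

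The only real subtlety — and the point I would flag as the main obstacle if one wanted a fully rigorous statement — is the meaning of "preserves the normal bundle $\mathcal{O}(2)$" for small $t$: what is actually true is that the property "normal bundle $\cong \mathcal{O}(2)$" is preserved, not that a fixed line is fixed, and the passage to a genuine action on $M^{\mathbb{C}}$ requires the completeness of the Kodaira family (rigidity of the lines, $H^1(\ell, \mathcal{O}(2)) = 0$). Everything else is formal: biholomorphisms send rational curves to rational curves and commute with the normal-bundle construction. So the proof reduces to (i) $\phi_t$ biholomorphic $\Rightarrow$ normal bundle type preserved $\Rightarrow$ image is a minitwistor line, and (ii) completeness of the family $\Rightarrow$ $\phi_t$ descends to a local flow on $M^{\mathbb{C}}$, which is precisely the assertion that the minitwistor lines (as a family) are preserved.
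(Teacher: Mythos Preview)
Your argument is essentially the paper's one-line proof unpacked: a biholomorphism preserves the isomorphism type of the normal bundle, hence sends rational curves with normal bundle $\mathcal{O}(2)$ to rational curves with normal bundle $\mathcal{O}(2)$, i.e.\ minitwistor lines to minitwistor lines. The only wobble is your ``second step'', where you momentarily read ``preserves'' as $\phi_t(\ell)=\ell$ before correctly retracting to the intended setwise meaning (the family is preserved and $\phi_t$ descends to a flow on $M^{\mathbb{C}}$); with that clarified, your approach and the paper's coincide.
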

 If the vector field is real then it, moreover, preserves real minitwistor lines. The following theorem is a common knowledge by experts in the field but since we were unable to find a source for it, we will state and prove it here.

\begin{theorem}{\label{Theorem 2}}
Real holomorphic vector fields on $T$ correspond to symmetries of ($M$,$[g]$,$\mathcal{D}$).
\end{theorem}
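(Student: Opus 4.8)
The plan is to exploit the one-to-one, functorial nature of the minitwistor correspondence: a biholomorphism of $T$ that respects the relevant incidence data must transport minitwistor lines to minitwistor lines, hence descend to a diffeomorphism of the parameter space, and the defining characterization in Proposition \ref{Proposition 1} will then force that descended map to preserve $([g],\mathcal{D})$. Conversely, a symmetry of $(M,[g],\mathcal{D})$ acts on the space of totally geodesic null hypersurfaces, which is exactly $T$, and the action is holomorphic because the complexified structure is. So I would organize the argument as two maps that are inverse to each other.

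First I would take a real holomorphic vector field $X$ on $T$ with flow $\phi_t$. By Lemma \ref{Lemma 1}, $\phi_t$ permutes the minitwistor lines, and since $X$ is real it permutes the real ones; hence it induces a flow $\psi_t$ on the parameter space $M$ (and, by complexifying, on $M^{\mathbb{C}}$), which is holomorphic because $\phi_t$ is biholomorphic and the identification of $M^{\mathbb{C}}$ with a component of the Douady space of $T$ is holomorphic. Next I would check that $\psi_t$ preserves the three families of objects listed in Proposition \ref{Proposition 1}: $\phi_t$ sends the set $\{l : w \in l\}$ to $\{l : \phi_t(w) \in l\}$, so the family $\{M^{\mathbb{C}}_w\}$ is preserved setwise; likewise $\{M^{\mathbb{C}}_{w,w'}\}$; and the condition ``$w$ is a double point in $l$'' is manifestly invariant under the biholomorphism $\phi_t$. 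By the uniqueness clause in Proposition \ref{Proposition 1}, the Einstein-Weyl structure is characterized by these incidence families, so $\psi_t^\ast([g],\mathcal{D}) = ([g],\mathcal{D})$; restricting to the real locus $M$ gives a symmetry in the sense of the preceding definition.

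For the reverse direction I would start with a symmetry $F$ of $(M,[g],\mathcal{D})$, complexify it to a biholomorphism $F^{\mathbb{C}}$ of $M^{\mathbb{C}}$ preserving $([g],\mathcal{D})$, and observe that $F^{\mathbb{C}}$ therefore permutes totally geodesic null hypersurfaces; since $T$ is recovered as the space of such hypersurfaces, $F^{\mathbb{C}}$ induces a biholomorphism $\widehat{F}$ of $T$ commuting with the real structure, and differentiating a one-parameter family yields a real holomorphic vector field. Finally I would verify that the two assignments are mutually inverse — transporting a line $l$ by $\widehat{F}$ and then reading off the corresponding point recovers $F(l)$, and vice versa — which establishes the claimed correspondence. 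The main obstacle I anticipate is the regularity/analyticity bookkeeping: one must argue that the induced maps on the parameter spaces are genuinely holomorphic and depend holomorphically (and, in the real case, real-analytically) on $t$, and that passing between $T$, $M^{\mathbb{C}}$, and $M$ via the incidence relation is compatible with the real structures; these are the places where one should be careful rather than the formal permutation-of-lines argument, which is essentially immediate from Lemma \ref{Lemma 1} and Proposition \ref{Proposition 1}.
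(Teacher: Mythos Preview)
Your forward direction is essentially the paper's own proof: use Lemma~\ref{Lemma 1} to see that the flow $\phi_t$ permutes (real) minitwistor lines, hence acts on $M^{\mathbb{C}}$ and $M$; then observe that $\phi_t$ carries $M^{\mathbb{C}}_w$ to $M^{\mathbb{C}}_{\phi_t(w)}$ and $M^{\mathbb{C}}_{w,w'}$ to $M^{\mathbb{C}}_{\phi_t(w),\phi_t(w')}$, so by the uniqueness in Proposition~\ref{Proposition 1} the induced map preserves $([g],\mathcal{D})$; reality of $X$ then restricts this to $M$. The paper's write-up is slightly terser (it does not spell out item~3 of Proposition~\ref{Proposition 1}, and it flags that $w,w'$ must be sufficiently close for $M^{\mathbb{C}}_{w,w'}$ to make sense), but the mechanism is identical.

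Where you differ is that you also supply the converse: starting from a symmetry of $(M,[g],\mathcal{D})$, complexifying, and using the description of $T$ as the space of totally geodesic null hypersurfaces to produce a real holomorphic vector field on $T$, together with the check that the two assignments invert one another. The paper's proof, despite the word ``correspond'' in the statement, only argues the direction from vector fields on $T$ to symmetries of $M$; your addition makes the correspondence genuinely two-sided. Your closing caveat about holomorphicity of the induced flow on the parameter space (via the Douady/Kodaira description of $M^{\mathbb{C}}$) and compatibility with the real structures is well placed---these are exactly the points the paper leaves implicit.
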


\begin{proof}
\normalfont
 First we want to show that the families of submanifolds $\{M_w\}$ and $\{M_{w,w^{\prime}}\}$ are preserved by the transformation $\phi_t$. Observe that under the action of the transformation $\phi_t:w\mapsto \phi_t(w)$, $w \in M$, the submanifold $M_w$ are transformed into $M_{\phi_t(w)} :=\{l\in M|\phi_t(w) \in l\}$. Furthermore, note that the twistor lines containing $w$ are the transformed into the twistor lines containing $\phi_t(w)$, hence by Lemma \ref{Lemma 1},  $\phi_t (M_w)\subseteq M_{\phi_t(w)}$. Moreover, since $\phi_t$ is an isomorphism the inclusion in the other direction is obtained by the inverse, therefore, the family of submanifolds $\{M_w\}$ are preserved. The proof for $\{M_{w,w^{\prime}}\}$ is similar, however, it is worth to notice that it is necessary for the points $w$ and $w^\prime$ to be sufficiently near each other. Hence, by Proposition \ref{Proposition 1} holomorphic vector fields on $T$ correspond to symmetries of the underlying complexified Einstein-Weyl manifold. The reality condition on the vector fields imply that the symmetries restrict to symmetries of the underlying real Einstein-Weyl manifold.
\end{proof}

\subsection{\textbf{Review of the twistor construction:}}{\label{subsection 2.2}}

In \cite{2} A. Bor\'owka gives a description for a construction of minitwistor spaces for asymptotically hyperbolic Einstein-Weyl spaces. In this section, a concise review of the first part of the construction is given, for more details and proofs see \cite{2}.\newline

Let $\mathcal{M}$ be a real analytic surface with a Cartan connection, by complexification we obtain a complex surface $\Sigma$ with a complexified conformal Cartan connection ($V$, $\langle.,.\rangle$, $\Lambda$, $\mathcal{D}$) defined as in Definition \ref{definition 2}. Moreover, there exists $\Lambda^{0}\subset V$ that is the annihilator of $\Lambda$ and for each point $\sigma \in \Sigma$, we will have two null planes $U^{+}_{\sigma}\subset \Lambda^{0}$ and $U^{-}_{\sigma}\subset \Lambda^{0}$ , which are defined using the induced degenerated inner product on $\Lambda^{0}_{\sigma}$, as the solutions to $\langle a,a\rangle=0$ for $a\in \Lambda^{0}_{\sigma}$ . Let $U^+$ and $U^-$ be the two null subbundles of $\Lambda^\bot \subset V$ defined fiberwise by these null planes with $\Lambda=U^+\cap U^-$.
\newline

Using the isomorphism $\epsilon$ between $T\Sigma \otimes \Lambda$ and $\Lambda^\bot/\Lambda$  given in Definition \ref{definition 2}, two line subbundles $t^+,t^-$ of the tangent bundle $T\Sigma$ can be defined as follows

\begin{align}
\epsilon(t^+\otimes \Lambda)=U^+/\Lambda \;\;\;\;\;\;\;\; \texttt{and}\;\;\; \epsilon(t^-\otimes \Lambda )=U^-/\Lambda
\end{align}

These define two families of curves $C^+$ and $C^-$ as integral curves of the line subbundles $t^+$ and $t^-$ respectively. Moreover, fiber bundle $F^+$(respectively $F^-$) with fibers given by $F^+_{\sigma}:=\mathbb{P}\left(U^+_{\sigma}\right)$ [respectively $F^-_{\sigma}:=\mathbb{P}\left(U^-_{\sigma}\right)$] is defined.

The connection $\mathcal{D}$ on the base manifold induces a connection along the curves $C^+$(respectively $C^-$) and using this connection it is possible to horizontally lift the curves from $C^+$(respectively $C^-$) to $F^+$(respectively $F^-$). Furthermore, $\Sigma$ can be restricted in such a way that the curves from each family do not intersect each other.

\begin{prop}
The horizontally lifted curves from $C^+$(respectively $C^-$) families, locally foliate the total space of the bundle $F^+$(respectively $F^-$) and the leaf space of the foliations is a manifold which is denoted by $T^+$(respectively $T^-$).

\begin{proof}
\normalfont
See \cite{2}
\end{proof}
\end{prop}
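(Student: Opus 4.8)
The plan is to realise the required foliation as the integral foliation of a rank‑one holomorphic distribution on $F^+$, and then to obtain $T^+$ as its (local) leaf space after a suitable restriction of $\Sigma$. First I would record the structure of $F^+$: since $U^+$ has rank $2$, the projectivisation $\pi\colon F^+\to\Sigma$, $F^+_\sigma=\mathbb{P}(U^+_\sigma)$, is a holomorphic $\mathbb{CP}^1$-bundle, so $F^+$ is a $3$-dimensional complex manifold. Restricting $\mathcal{D}$ along a curve $C\in C^+$ gives a holomorphic linear connection on the rank‑$2$ bundle $U^+|_C$; its projectivisation is a connection on the $\mathbb{CP}^1$-bundle $F^+|_C\to C$, whose parallel sections are by definition the horizontal lifts of $C$. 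These are holomorphic curves in $F^+$ covering $C$, and as the base point ranges over the fibre $\mathbb{P}(U^+_\sigma)$ they sweep out all of $\pi^{-1}(C)$.

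Next I would show that the tangents to the horizontal lifts patch into a holomorphic line subbundle $\mathcal{L}\subset TF^+$. This is immediate from the local description of the lift: the horizontal lift of the (nowhere-zero) section of $t^+$ through a point of $F^+$ is a nowhere-zero vector depending holomorphically on that point, and it projects isomorphically onto $t^+$, so $\mathcal{L}$ has rank exactly $1$ everywhere and is transverse to the fibres of $\pi$. A rank‑one distribution is automatically involutive, so the holomorphic Frobenius theorem produces a $1$-dimensional holomorphic foliation $\mathcal{F}^+$ of $F^+$ whose leaves are precisely the horizontal lifts of the curves of $C^+$; this is exactly the statement that the lifted curves locally foliate $F^+$.

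For the leaf space, I would argue in local coordinates and then shrink. After restricting $\Sigma$, the integral curves of $t^+$ are the fibres of a holomorphic submersion $\Sigma\to\mathbb{C}$, so one may choose coordinates $(x,y)$ on $\Sigma$ with the $C^+$-curves being $\{y=\mathrm{const}\}$, together with a local trivialisation of $F^+$ with fibre coordinate $\zeta$. In these coordinates the horizontal-lift condition along $\{y=\mathrm{const}\}$ becomes a holomorphic ODE $\dot{\zeta}=a(x,y,\zeta)$ in $x$ with parameter $y$ (in fact a Riccati equation, being the projectivisation of a linear one), so holomorphic existence and uniqueness yield a holomorphic local flow $\Phi$, and the leaf through a point is the graph of the corresponding solution. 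Intersecting leaves with a transversal $\{x=x_0\}$ identifies the local leaf space with an open subset of the $(y,\zeta)$-plane, a $2$-dimensional complex manifold; replacing $x_0$ by $x_0'$ changes this identification by the biholomorphism $(y,\zeta)\mapsto(y,\Phi^{\,y}_{x_0\to x_0'}(\zeta))$, so the charts are holomorphically compatible. Shrinking $\Sigma$ (hence $F^+$) further — using the fact already noted that the curves of $C^+$ can be taken pairwise disjoint — one arranges that each leaf meets a fixed transversal in at most one point and that distinct leaves are separated, so the local quotients patch to a Hausdorff complex surface $T^+$; the identical argument applied to $U^-,t^-,C^-$ produces $T^-$.

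The step I expect to be the genuine obstacle is the last one: upgrading the local leaf spaces to an honest (Hausdorff) manifold, i.e. verifying that after an appropriate restriction of $\Sigma$ the foliation $\mathcal{F}^+$ is simple. This requires controlling how the horizontal lifts of nearby $C^+$-curves spread through the $\mathbb{CP}^1$-fibres of $F^+$ — so that one transversal catches every nearby leaf exactly once — together with a separation argument for leaves lying over different $C^+$-curves. The remaining ingredients (the line subbundle $\mathcal{L}\subset TF^+$, trivial Frobenius integrability, and holomorphic ODE theory) are routine. Full details are in \cite{2}.
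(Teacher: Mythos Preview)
Your proposal is correct and follows the natural line of argument; note, however, that the paper itself gives no proof at all here---it simply defers to \cite{2}---so there is nothing in the present paper to compare your sketch against. Your approach (horizontal lifts as integral curves of a rank-one holomorphic distribution on the $\mathbb{CP}^1$-bundle $F^+$, trivial Frobenius, Riccati-type ODE in a local trivialisation, then shrinking $\Sigma$ to make the foliation simple) is exactly the standard one and is what one finds, in essence, in the cited source; your identification of the Hausdorff/simplicity step as the only nontrivial point is also accurate.
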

We restrict the manifold $\Sigma$ such that any horizontally lifted curve from the $C^+$ family intersects a horizontally lifted curve from $C^-$ family at most once and if we denote by $\sigma \in \Sigma$ the points of the intersection, then $\mathbb{P}(\Lambda)_\sigma=F^+_\sigma \cap F^-_\sigma$ holds. Therefore, we have for any point $\mathbb{P}(\Lambda)_\sigma$ exactly one element of $T^+$ which intersects one element $T^-$ and it enables us to glue the leaf spaces $T^+$ and $T^-$.

\begin{definition}
$T^+$ and $T^-$ can be glued together in the following way:
\begin{align}
T:=T^+\bigsqcup_{\thicksim} T^-,
\end{align}
 $ \forall t^+ \in T^+$, $ t^- \in T^-$, $ t^+ \thicksim t^- \Leftrightarrow \exists \, \sigma \in \Sigma $ \, : $t^+ \cap t^- = \lbrace\mathbb{P}(\Lambda)_\sigma\rbrace.$ 
\end{definition}

\begin{definition}
For each curve $c^+ \in C^+$ the family of its horizontal lifts define a projective line in $T^+$ which will be denoted by $l^{+}_{c^+}$ and analogously $l^{-}_{c^-}$ in $T^-$ is defined for $c^- \in C^-$.
\end{definition}

Since for each point $\sigma \in \Sigma$ there exists exactly one curve from each family such that $\sigma \in c^{\pm}$, we will use the notation $l^{+}_{\sigma}:= l^{+}_{c^+}$ and $l^{-}_{\sigma}:= l^{-}_{c^-}$ instead. Also note that the minitwistor lines are deformations of this line pairs. $T$ is a minitwistor space of an Einstein-Weyl manifold and it admits a real structure induced naturally by the initial complexification of the Cartan connection. Furthermore, the pairs of intersecting lines $l^{\pm}_{\sigma}$ correspond to points on a boundary of this Einstein-Weyl manifold.

\section{Construction of a vector field on the minitwistor space}{\label{section 3}}
 Let $\Sigma$ be a complex surface with a complexified conformal Cartan connection, which is a complexification of a real analytic surface $\mathcal{M}$ and X be a holomorphic vector field on $\Sigma$ which is obtained as a complexification of a symmetry of the conformal Cartan connection (see Definition \ref{definition 3}) as described in Section \ref{section 2}. $T$ is a minitwistor space as in \textsection{\ref{subsection 2.2}} and $\phi_t$ is the flow of the vector field $X$. In this section, we argue how $X$ induces a transformation on the minitwistor space $T$. Then in the next section we will show that it is in fact a symmetry of the minitwistor space and therefore it induces a symmetry of the underlying Einstein-Weyl manifold.

\begin{lemma}{\label{Lemma 2}}
 The fiber bundles $U^+$ and $U^-$ are preserved by $\phi_t$.
 \\
 \begin{proof}
 \normalfont
  The transformation induced by $\phi_t$ preserves the inner product, therefore it preserves the fiber bundles $U^+$ and $U^-$.
 \end{proof}
 \end{lemma}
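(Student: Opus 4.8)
The plan is to expand the one-line observation ``$\phi_t$ preserves the inner product'' into a genuine check that the construction of $U^{\pm}$ out of the conformal Cartan connection is equivariant under the action $\phi_t$ induces on $V$. Since $V$ is the bundle associated to $T\Sigma$, the flow $\phi_t$ on $\Sigma$ lifts canonically to a bundle automorphism of $V$ covering $\phi_t$; by parts (1) and (2) of Definition \ref{definition 3} this lift preserves the null line subbundle $\Lambda$ and acts fibrewise isometrically for $\langle\cdot,\cdot\rangle$. These two facts are all that will be used.

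First I would record that the derived subbundles are preserved: $\Lambda^{\perp}$ because $\phi_t$ preserves both $\Lambda$ and the metric, and $\Lambda^{0}$ because it is the annihilator of $\Lambda$ (equivalently $\Lambda^{\perp}$ under the isomorphism $V\cong V^{*}$ afforded by $\langle\cdot,\cdot\rangle$). Restricting the fibrewise isometry to $\Lambda^{0}_{\sigma}\to\Lambda^{0}_{\phi_t(\sigma)}$ then intertwines the two induced degenerate inner products, hence carries the null cone $\{a\in\Lambda^{0}_{\sigma}:\langle a,a\rangle=0\}=U^{+}_{\sigma}\cup U^{-}_{\sigma}$ onto $U^{+}_{\phi_t(\sigma)}\cup U^{-}_{\phi_t(\sigma)}$. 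Since each such null cone is a union of exactly two $2$-planes meeting along $\Lambda$, the lift of $\phi_t$ sends the unordered pair $\{U^{+},U^{-}\}$ to the unordered pair over the image point; so it either preserves $U^{+}$ and $U^{-}$ individually or swaps them.

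The main (and essentially only non-formal) step is to rule out the swap, and here I would invoke that $\{\phi_t\}$ is a \emph{local one-parameter group} with $\phi_0=\mathrm{id}$. The induced automorphisms of $V$ depend continuously on $t$ and reduce to the identity at $t=0$, which fixes $U^{+}$ and $U^{-}$ separately; because $U^{+}_{\sigma}\neq U^{-}_{\sigma}$ at every point, the continuously varying image of $U^{+}_{\sigma}$ cannot jump to $U^{-}_{\sigma}$, so for all sufficiently small $t$ one gets $\phi_t(U^{+})=U^{+}$ and $\phi_t(U^{-})=U^{-}$, which is the assertion of the lemma. (One could even dispense with this point: the downstream construction of $C^{\pm}$, $F^{\pm}$, $T^{\pm}$ in \textsection\ref{subsection 2.2} depends only on the pair $\{U^{+},U^{-}\}$ together with the labelling fixed once at the outset, so preserving the pair would already suffice — but preserving each subbundle is the cleaner statement and follows as above.)
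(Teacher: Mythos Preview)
Your proof is correct and follows the same approach as the paper's one-line argument: the preservation of the inner product forces preservation of the null subbundles $U^{\pm}$. Your expansion is more careful than the paper's version, in particular the continuity argument (using $\phi_0=\mathrm{id}$) to exclude the possibility that $\phi_t$ swaps $U^{+}$ and $U^{-}$ is a point the paper leaves implicit.
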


\begin{lemma}{\label{Lemma 3}}
 Let $t^{\pm}$ be the line subbundles of $T\Sigma$ defined as in \textsection{2.2}. $t^{\pm}$ are preserved by $\phi_t$ and hence the families of curves $c^\pm$, which are the integral curves of $t^\pm$ are also preserved.
 
 \begin{proof}
 \normalfont
 The line bundles $t^{\pm}$ were defined using the isomorphism $\epsilon$, the fiber bundles $U^{\pm}$ and the line bundle $\Lambda$, since $\phi_t$ preserves all of them, $t^{\pm}$ will also be preserved. 
 \end{proof}
 \end{lemma}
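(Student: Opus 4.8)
The plan is to exploit that the line subbundles $t^{\pm}$ are built entirely out of data the flow already preserves: the Cartan isomorphism $\epsilon$, the null subbundles $U^{\pm}$, and the null line $\Lambda$. Recall that $t^{\pm}$ are characterised by $\epsilon(t^{\pm}\otimes\Lambda)=U^{\pm}/\Lambda$ inside $\Lambda^{\perp}/\Lambda$, so it suffices to show that each ingredient of this relation is $\phi_t$-invariant and then to cancel $\epsilon$ and $\Lambda$.

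First I would record that, because $V$ is the bundle associated with $T\Sigma$, the flow $\phi_t$ induces a bundle automorphism $\Phi_t$ of $V$ covering $\phi_t$. By Definition \ref{definition 3}(1) this $\Phi_t$ preserves $\Lambda$, hence also $\Lambda^{\perp}$ and the annihilator $\Lambda^{0}$; by Definition \ref{definition 3}(2) it is an isometry of $\langle\cdot,\cdot\rangle$; and by Definition \ref{definition 3}(3) it intertwines $\mathcal{D}$ with itself. By the Remark following Definition \ref{definition 3}, $\Phi_t$ therefore intertwines $\epsilon$ with $(\phi_t)_*\otimes(\Phi_t|_{\Lambda})$, i.e. the square relating $\epsilon$ on $T\Sigma\otimes\Lambda$ to the induced map on $\Lambda^{\perp}/\Lambda$ commutes. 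By Lemma \ref{Lemma 2}, $\Phi_t(U^{\pm})=U^{\pm}$, so the induced map on $\Lambda^{\perp}/\Lambda$ carries $U^{\pm}/\Lambda$ to itself.

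Pushing the defining relation forward by $\phi_t$ and using this intertwining property gives $\epsilon\big((\phi_t)_*t^{\pm}\otimes\Lambda\big)=\Phi_t\big(U^{\pm}/\Lambda\big)=U^{\pm}/\Lambda=\epsilon(t^{\pm}\otimes\Lambda)$; since $\epsilon$ is a bundle isomorphism and $\Lambda$ is a line bundle, we may cancel to obtain $(\phi_t)_*t^{\pm}=t^{\pm}$. Finally, a diffeomorphism carries the integral curves of a line field to the integral curves of its pushforward, so the families $C^{\pm}$ of integral curves of $t^{\pm}$ are mapped to themselves (each curve going to another curve of the same family); on the restricted $\Sigma$ where the curves of each family are pairwise disjoint, this is exactly the assertion of the lemma.

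The only delicate point --- the ``hard part'', such as it is --- is justifying the equivariance of $\epsilon$ with care: one must check that the automorphism $\Phi_t$ of $V$ coming from a symmetry is genuinely compatible with the Cartan derivative $\mathcal{D}|_{\Lambda}$ modulo $\Lambda$ that defines $\epsilon$, i.e. that preserving $\mathcal{D}$ and $\Lambda$ really forces $\Phi_t\circ\epsilon=\epsilon\circ\big((\phi_t)_*\otimes\Phi_t|_{\Lambda}\big)$ after passing to the quotient by $\Lambda$. Granting that, the statement follows formally from Lemma \ref{Lemma 2} and Definition \ref{definition 3}.
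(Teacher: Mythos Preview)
Your argument is correct and follows exactly the same route as the paper's proof: since $\phi_t$ preserves $\epsilon$, $U^{\pm}$ and $\Lambda$, and $t^{\pm}$ is defined via $\epsilon(t^{\pm}\otimes\Lambda)=U^{\pm}/\Lambda$, it must preserve $t^{\pm}$ and hence the integral curves. You have simply written out in detail the equivariance and cancellation that the paper leaves implicit.
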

 
 Recall that the vector bundle $V$ is an associated bundle to the tangent bundle $T\Sigma$, hence there exists a transformation $\Tilde{\phi}_t$ on $V$, which is induced by $\phi_t$ as follows:
 
 \begin{align}
 \tilde{\phi}_t \left(\sigma,v\right) = \left( \phi_t(\sigma),(\phi_{t})_*(v)\right).\label{1}
 \end{align}

 \begin{lemma}{\label{Lemma 4}}
 The transformation $\Tilde{\phi}_t$ (\ref{1}) preserves the fiber bundles $U^\pm$.
 
 \begin{proof}
 \normalfont
  The fiber bundles $U^\pm$ are subbundles of $V$. We abuse the notation and denote maps $\Tilde{\phi}_{t}|_{U^{\pm}}$ also by $\Tilde{\phi}_t$ .We want to show that the following diagram exists:

  \begin{displaymath}
 \begin{tikzcd}
  U^\pm \arrow[r,"\tilde{\phi_t}"] \arrow[d,"\pi "]
    & U^\pm \arrow[d,"\pi "]  \\
    \Sigma  \arrow[r, "\phi_t "]
    &  \Sigma    
 \end{tikzcd}
 \end{displaymath}
 
 Since we proved in Lemma \ref{Lemma 2} that $\phi_t$ preserves the fiber bundles $U^\pm$, $\tilde{\phi}_t$ will preserve the fiber bundles $U^\pm$, therefore, the diagram is well-defined. 

 \end{proof}
 \end{lemma}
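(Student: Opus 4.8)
The plan is to check that the lift $\tilde{\phi}_t$ defined in \eqref{1} is a genuine vector bundle automorphism of $V$ covering $\phi_t$, and then to follow the structures that cut out $U^{\pm}$ inside $V$, verifying that each is sent to itself. Since $V$ is the bundle associated to $T\Sigma$, the differential $(\phi_t)_{\ast}\colon T\Sigma\to T\Sigma$ of the (local) diffeomorphism $\phi_t$ induces functorially a fibrewise-linear bundle map on $V$, and on total spaces this is exactly the assignment $(\sigma,v)\mapsto(\phi_t(\sigma),(\phi_t)_{\ast}v)$; in particular $\pi\circ\tilde{\phi}_t=\phi_t\circ\pi$, which is the commuting square in the statement. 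It therefore only remains to see that $\tilde{\phi}_t$ carries the subset $U^{\pm}\subset V$ into itself, i.e. that $\tilde{\phi}_t(U^{\pm}_{\sigma})=U^{\pm}_{\phi_t(\sigma)}$ for $t$ small.

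For this I would use that $X$ is a symmetry of the conformal Cartan connection (Definition \ref{definition 3}): $\phi_t$ preserves $\Lambda$ and the inner product, so the induced $\tilde{\phi}_t$ preserves the fibre metric $\langle\cdot,\cdot\rangle$ on $V$ and the null line subbundle $\Lambda\subset V$, hence also $\Lambda^{\perp}$ and the annihilator $\Lambda^{0}$ (this is, at the level of the ambient bundle, essentially the content of Lemma \ref{Lemma 2}, now recorded for the explicit lift \eqref{1}). Recall that $U^{+}_{\sigma}$ and $U^{-}_{\sigma}$ are by construction the two distinct null subspaces of the induced degenerate quadratic form on $\Lambda^{0}_{\sigma}$, equivalently the two null subbundles of $\Lambda^{\perp}$ with $\Lambda=U^{+}\cap U^{-}$. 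A linear isomorphism that preserves $\langle\cdot,\cdot\rangle$, $\Lambda$ and $\Lambda^{\perp}$ must therefore send the unordered pair $\{U^{+}_{\sigma},U^{-}_{\sigma}\}$ to $\{U^{+}_{\phi_t(\sigma)},U^{-}_{\phi_t(\sigma)}\}$; hence $\tilde{\phi}_t$ either preserves $U^{+}$ and $U^{-}$ separately or interchanges them.

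The one point that needs care is ruling out the interchange, and I see two ways to do it. The first is to invoke Lemma \ref{Lemma 3}: $\phi_t$ preserves each line subbundle $t^{\pm}\subset T\Sigma$ individually, and by the remark following Definition \ref{definition 3} the isomorphism $\epsilon$ is preserved, so from $\epsilon(t^{\pm}\otimes\Lambda)=U^{\pm}/\Lambda$ together with the preservation of $\Lambda$ one concludes that $\tilde{\phi}_t$ preserves $U^{+}$ and $U^{-}$ separately. The second is a continuity argument: $t\mapsto\tilde{\phi}_t$ is continuous with $\tilde{\phi}_0=\mathrm{id}$, and since $U^{+}_{\sigma}\neq U^{-}_{\sigma}$ are distinct points of the relevant Grassmann (here projective) fibre, the discrete alternative of a swap cannot occur for $t$ near $0$. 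In either case $\tilde{\phi}_t$ restricts to maps $U^{\pm}\to U^{\pm}$, so $\tilde{\phi}_t|_{U^{\pm}}$ is well defined and the diagram in the statement makes sense, which is what was to be shown. The first two steps are essentially bookkeeping with the associated-bundle construction and Definition \ref{definition 3}; I expect the genuine substance to lie in this last non-swapping step.
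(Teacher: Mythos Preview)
Your argument is correct and follows the same route as the paper: the paper's proof simply invokes Lemma~\ref{Lemma 2} (preservation of the inner product forces preservation of $U^{\pm}$) to conclude that the lift $\tilde{\phi}_t$ restricts to $U^{\pm}$ and the square commutes. Your write-up is more careful than the paper's, in that you explicitly isolate and resolve the potential swap $U^{+}\leftrightarrow U^{-}$ (via $\epsilon$ and Lemma~\ref{Lemma 3}, or by continuity in $t$), a point the paper leaves implicit in its ``sufficiently small $t$'' standing assumption.
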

 
  It was discussed in \textsection{\ref{subsection 2.2}} that there exist two families of curves $C^+$ and $C^-$ on the base manifold $\Sigma$ which are then lifted to the fiber bundles $F^+$ and $F^-$ respectively and the lifted curves foliate the fiber bundles $F^{\pm}$. Now in order to show that the symmetry descends to  $T^\pm$ , first we have to prove that it preserves the lifted curves.
  
 \begin{lemma}{\label{Lemma 5}}
 The transformation $\tilde{\phi_{t}}$ preserves the lifted curves of $C^\pm$ families and therefore gives a transformation on the leaf spaces of the curves lifted to $U^\pm$.
 
 \begin{proof}
 \normalfont
  Let $C^\pm$ and $ \tilde{C}^\pm$ denote respectively the family of curves on $\Sigma$ and the families of curves lifted to the fiber bundles $U^\pm$. Take a curve $c^{+}_{1} \in C^+$ and let $\tilde{c}^{+}_{1}\in \Tilde{C}^+$ be a curve obtained by horizontally lifting $c^{+}_{1}$. By Lemma \ref{Lemma 3}, $\phi_t$ maps the curve $c^{+}_{1}$ to another curve $c^{+}_{2}$ from the same family. We want to show that $\tilde{\phi}_t$ transforms the curve $\tilde{c}^{+}_{1}$ into a horizontal lift of the curve $c_2^+$. Therefore, this will imply the image of $\Tilde{c}^+_1$ belongs to $\Tilde{C}^+$ and hence we will have the following diagram:
  
 \begin{displaymath}
  \begin{tikzcd}
   \tilde{C^{\pm}} \arrow[r, "\tilde{\phi_t} "] \arrow[d, "\pi "]
   & \tilde{C^{\pm}} \arrow[d, "\pi "]\\
   C^\pm \arrow[r, "\phi_t "]
   & C^\pm
 \end{tikzcd}
 \end{displaymath}
 
  Let $X_{\tilde{c}^{+}_{1}}$, $X_{c_1^+}$ and $X_{c_2^+}$ be tangent vector fields to the curves  $\tilde{c}^{+}_{1}$, $c_1^+$ and $c_2^+$ respectively and we denote $X_2:=(\tilde{\phi}_t)_{*}(X_{\tilde{c}_{1}^{+}})$. The fact that $X_2$ is a horizontal lift of $X_{c^{+}_{2}}$ comes from definition of $\tilde{\phi}_t$ (see Equation \ref{1}): we have that $(\phi_t)_\ast \mathcal{D}=\mathcal{D}$ holds, hence, this follows from the properties of the horizontal lift. The proof for curves of $C^-$ family is analogous.
 
 \end{proof}
 \end{lemma}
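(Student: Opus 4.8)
The plan is to exploit that $\tilde{\phi}_t$ is a fibrewise-linear bundle automorphism of $V$ covering $\phi_t$ which, by Lemma \ref{Lemma 4}, restricts to an automorphism of each of $U^+$ and $U^-$, together with the defining property of a symmetry that $\mathcal{D}$ is $\phi_t$-invariant. First I would record that, being linear on fibres (see Equation \eqref{1}), $\tilde{\phi}_t|_{U^{\pm}}$ descends to a bundle automorphism $\bar{\phi}_t$ of $F^{\pm}=\mathbb{P}(U^{\pm})$ covering $\phi_t$; everything said below for curves lifted to $U^{\pm}$ transfers verbatim to their images in $F^{\pm}$, because the connection induced along the $C^{\pm}$-curves on $F^{\pm}$ is the projectivisation of the partial connection induced on $U^{\pm}$ and projectivisation is natural. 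I would also note explicitly that this partial connection used to define the horizontal lift is nothing but the restriction of $\mathcal{D}$ along the $C^{\pm}$-curves, so that the hypothesis $(\phi_t)_*\mathcal{D}=\mathcal{D}$ of Definition \ref{definition 3} is directly applicable.

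Next I would fix a curve $c_1^+\in C^+$ and a horizontal lift $\tilde{c}_1^+$ of it into $U^+$. By Lemma \ref{Lemma 3} the image $c_2^+:=\phi_t(c_1^+)$ is again a member of the $C^+$ family, so it suffices to show that $\tilde{\phi}_t(\tilde{c}_1^+)$ is a horizontal lift of $c_2^+$. Writing the lift as a parallel section $s$ of $U^+$ along $c_1^+$, i.e. $\mathcal{D}_{\dot{c}_1^+}s\equiv 0$, the curve $\tilde{\phi}_t(\tilde{c}_1^+)$ is the section $\tau\mapsto\tilde{\phi}_t(s(\tau))$ along $c_2^+$, which is precisely the pushforward section $(\phi_t)_*s$ (computed with $\phi_t^{-1}$ on the base and $\tilde{\phi}_t$ on the fibre, as in \eqref{1}). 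The key point is the naturality of the covariant derivative under a bundle automorphism,
\begin{equation*}
(\phi_t)_*\!\left(\mathcal{D}_{\dot{c}_1^+}s\right)\;=\;\big((\phi_t)_*\mathcal{D}\big)_{\dot{c}_2^+}\,(\phi_t)_*s ,
\end{equation*}
so that, using $(\phi_t)_*\mathcal{D}=\mathcal{D}$, the right-hand side equals $\mathcal{D}_{\dot{c}_2^+}\big((\phi_t)_*s\big)$. Hence $(\phi_t)_*s$ is $\mathcal{D}$-parallel along $c_2^+$, i.e. $\tilde{\phi}_t(\tilde{c}_1^+)\in\tilde{C}^+$. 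Applying the same argument to $\phi_t^{-1}$ yields $\tilde{\phi}_t(\tilde{C}^+)=\tilde{C}^+$, which is exactly the commuting square in the statement; the $C^-$ case is identical.

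Finally I would pass to the leaf space: since the leaves of the relevant foliation are exactly the horizontally lifted curves, and $\tilde{\phi}_t$ has just been shown to carry each such curve to another such curve, $\tilde{\phi}_t$ permutes the leaves, and a diffeomorphism permuting the leaves of a regular foliation descends to a diffeomorphism of the leaf space; this gives the induced transformation on $T^+$ (resp. $T^-$). I expect the only genuinely delicate point to be the naturality identity above: one must be careful that the partial connection defining the horizontal lift is the restriction of $\mathcal{D}$ (so $(\phi_t)_*\mathcal{D}=\mathcal{D}$ can be invoked) and that the pushforward of a section along a curve is bookkept exactly as in \eqref{1}; modulo this, the remaining steps are formal.
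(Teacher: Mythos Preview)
Your argument is correct and follows essentially the same route as the paper: fix a curve $c_1^+$ with horizontal lift $\tilde{c}_1^+$, use Lemma~\ref{Lemma 3} to get $c_2^+=\phi_t(c_1^+)\in C^+$, and then invoke $(\phi_t)_*\mathcal{D}=\mathcal{D}$ to conclude that $\tilde{\phi}_t(\tilde{c}_1^+)$ is horizontal over $c_2^+$. The paper phrases this last step in terms of tangent vector fields (pushing forward $X_{\tilde{c}_1^+}$ and noting the result is horizontal), whereas you phrase it in terms of parallel sections and make the naturality identity $(\phi_t)_*(\mathcal{D}_{\dot{c}_1^+}s)=((\phi_t)_*\mathcal{D})_{\dot{c}_2^+}(\phi_t)_*s$ explicit; these are equivalent formulations of the same fact. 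Your version is somewhat more detailed, and you also fold in the passage to $F^\pm$ and to the leaf space, which the paper defers to Proposition~\ref{Proposition 3} and its corollary.
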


  \begin{prop}
  {\label{Proposition 3}}
 The constructed flow on the leaf space of curves lifted to $U^\pm$ is $\mathds{C}^{\times}$ invariant.
 \\
 \begin{proof}
 \normalfont
 The $\mathbb{C}^{\times}$ action maps isomorphically curves to curves and for the constructed flow $\tilde{\phi}_t$ (\ref{1}) the following holds:
 \begin{align}
 \tilde{\phi_{t}}\left( \sigma,u^\pm\right)= \left( \phi_t(\sigma),\phi_{t*}(\lambda u^\pm)\right)= \left( \phi_t(\sigma),\lambda\phi_{t*}(u^\pm)\right),\;\;\;\;\;\;\; \forall \lambda \in \mathbb{C}, \,u^\pm \in U^\pm.
 \end{align}
 which is a result of $\phi_{t*}$ being a linear isomorphism. 
 \end{proof}
 \end{prop}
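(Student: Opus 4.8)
The plan is to read off the conclusion from the explicit description of $\tilde\phi_t$ in \eqref{1} together with the fact that $(\phi_t)_*$ acts fibrewise linearly on $V$. Since $V$ is the bundle associated to $T\Sigma$, the map $(\phi_t)_*$ is a linear isomorphism from $V_\sigma$ onto $V_{\phi_t(\sigma)}$, and by Lemma~\ref{Lemma 4} it carries $U^{\pm}_\sigma$ isomorphically onto $U^{\pm}_{\phi_t(\sigma)}$. The $\mathbb{C}^{\times}$ action in question is the fibrewise scaling on the total spaces of $U^{\pm}$ (the action whose quotient on the complement of the zero section is $F^{\pm}=\mathbb{P}(U^{\pm})$). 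So the first task is to make sure both maps actually descend to the leaf space of the horizontal lifts: scaling a point of $U^{\pm}$ by $\lambda\in\mathbb{C}^{\times}$ sends a horizontal lift of a curve $c^{\pm}\in C^{\pm}$ to another horizontal lift of the \emph{same} base curve — this is exactly the linearity of the horizontal distribution of $\mathcal{D}$ along the fibres of a vector bundle — so $\mathbb{C}^{\times}$ permutes the leaves and induces an action on the leaf space; by Lemma~\ref{Lemma 5} the flow $\tilde\phi_t$ likewise permutes the leaves and descends.

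Granting this, denote by $q$ the quotient map from $U^{\pm}$ to its leaf space (defined after the usual restriction of $\Sigma$ that makes the leaf space a manifold). By construction $q$ is equivariant both for the $\mathbb{C}^{\times}$ action and for $\tilde\phi_t$, so it suffices to verify that $\tilde\phi_t$ commutes with the scaling action \emph{upstairs}, which is the one-line computation
\[
\tilde\phi_t(\sigma,\lambda u^{\pm})=\bigl(\phi_t(\sigma),(\phi_t)_*(\lambda u^{\pm})\bigr)=\bigl(\phi_t(\sigma),\lambda\,(\phi_t)_*(u^{\pm})\bigr)=\lambda\cdot\tilde\phi_t(\sigma,u^{\pm}),
\]
valid for every $\lambda\in\mathbb{C}^{\times}$ and $u^{\pm}\in U^{\pm}$, and amounting to nothing more than fibre-linearity of $(\phi_t)_*$. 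Applying $q$ to both sides yields the equivariance of the descended flow, i.e. the constructed flow on the leaf space of curves lifted to $U^{\pm}$ is $\mathbb{C}^{\times}$-invariant.

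The algebraic identity is trivial; the step I would treat with the most care — and the only real content — is the bookkeeping that makes the statement well-posed: confirming that fibrewise scaling genuinely preserves the foliation by horizontal lifts (so that "$\mathbb{C}^{\times}$-invariance of a flow on the leaf space" even makes sense), and that, after shrinking $\Sigma$, the leaf-space quotient is simultaneously equivariant for the scaling action and for $\tilde\phi_t$. Once those compatibilities are established, linearity of $(\phi_t)_*$ closes the argument.
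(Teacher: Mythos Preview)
Your argument is correct and follows the same route as the paper: the key step in both is the one-line linearity computation $\tilde\phi_t(\sigma,\lambda u^{\pm})=\lambda\cdot\tilde\phi_t(\sigma,u^{\pm})$ coming from fibrewise linearity of $(\phi_t)_*$, together with the observation that the $\mathbb{C}^{\times}$ scaling sends lifted curves to lifted curves. Your version is simply more explicit about the well-posedness bookkeeping (why both maps descend to the leaf space and why commutativity upstairs passes to the quotient), which the paper compresses into the phrase ``maps isomorphically curves to curves''.
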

 
 In Section \ref{subsection 2.2}, two fiber bundles $F^+$ and $F^-$ were defined fiberwise by $F^{\pm}_{\sigma}:=\mathds{P}\left(U^{\pm}_{\sigma}\right)$   respectively, and as a result of Proposition \ref{Proposition 3} we obtain transformations $\tilde{\phi}_{t}^{\pm}$ on $F^\pm$. Furthermore, these bundles were foliated by lifted curves, and their leaf spaces were denoted by $T^\pm$, hence, by Lemma \ref{Lemma 5} we obtain the following corollary.
 
 \begin{corollary}
 The obtained transformations descend to transformations on $T^+$ and $T^-$ and are denoted by $\tilde{\phi}^{\prime +}_t$ and $\tilde{\phi}^{\prime -}_t$ respectively.
 \end{corollary}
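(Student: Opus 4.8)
The plan is to read off the statement from the two results immediately preceding it together with the identification of $T^{\pm}$ as a leaf space. Recall that $F^{\pm}=\mathbb{P}(U^{\pm})$ is foliated by the horizontal lifts of the curves in the $C^{\pm}$ families, and that $T^{\pm}$ is \emph{defined} to be the (local) leaf space of this foliation. By the paragraph preceding the corollary we already have the biholomorphisms $\tilde{\phi}^{\pm}_t$ on $F^{\pm}$ coming from Proposition \ref{Proposition 3}; the only thing left to do is to show that $\tilde{\phi}^{\pm}_t$ carries leaves of this foliation to leaves, because a biholomorphism of the total space that preserves a foliation automatically induces a well-defined map on the leaf space, sending the class of a leaf $L$ to the class of its image $\tilde{\phi}^{\pm}_t(L)$.

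First I would observe that the horizontal lifts of the $C^{\pm}$-curves to $F^{\pm}$ are exactly the images, under the bundle projection $U^{\pm}\setminus 0 \to \mathbb{P}(U^{\pm})=F^{\pm}$, of the horizontal lifts to $U^{\pm}$ used in Lemma \ref{Lemma 5}; this is immediate since the connection $\mathcal{D}$ is the same object defining both lifts and the $\mathbb{C}^{\times}$-action commutes with horizontal transport. Hence Lemma \ref{Lemma 5}, which says $\tilde{\phi}_t$ sends a horizontal lift of $c^{+}_1$ to a horizontal lift of $c^{+}_2=\phi_t(c^{+}_1)$ inside $U^{\pm}$, descends through this projection: $\tilde{\phi}^{\pm}_t$ sends the leaf over $c^{+}_1$ to the leaf over $c^{+}_2$, and likewise for the $C^{-}$ family. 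In particular $\tilde{\phi}^{\pm}_t$ permutes the leaves, so the formula $\tilde{\phi}^{\prime\pm}_t([L]):=[\tilde{\phi}^{\pm}_t(L)]$ is well defined and is a bijection of $T^{\pm}$; it is holomorphic because it is locally the factorization of the holomorphic map $\tilde{\phi}^{\pm}_t$ through the holomorphic submersion $F^{\pm}\to T^{\pm}$, and it inherits the local one-parameter group property $\tilde{\phi}^{\prime\pm}_{s+t}=\tilde{\phi}^{\prime\pm}_s\circ\tilde{\phi}^{\prime\pm}_t$ from $\phi_t$.

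The only genuinely delicate point, and the one I would be most careful about, is the \emph{local} character of everything involved: $T^{\pm}$ is only a manifold after $\Sigma$ has been suitably restricted (the Proposition on foliations in \textsection\ref{subsection 2.2}), and the flow $\phi_t$, hence $\tilde{\phi}_t$ and $\tilde{\phi}^{\pm}_t$, is only defined for sufficiently small $t$ and on a neighbourhood small enough that leaves still meet the relevant transversal at most once. So the descent has to be phrased for $t$ in a small interval and on an open set on which the leaf space is Hausdorff; once that is set up, the argument above goes through verbatim. Everything else — well-definedness, holomorphy, and the group law — is routine bookkeeping built on Lemmas \ref{Lemma 4}, \ref{Lemma 5} and Proposition \ref{Proposition 3}.
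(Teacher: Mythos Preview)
Your argument is correct and follows exactly the route the paper intends: the corollary is stated without a separate proof, the preceding paragraph simply invoking Proposition~\ref{Proposition 3} (to pass from $U^{\pm}$ to $F^{\pm}=\mathbb{P}(U^{\pm})$) and Lemma~\ref{Lemma 5} (to see that the lifted-curve foliation is preserved, hence the map descends to the leaf spaces $T^{\pm}$). Your write-up supplies the details and the locality caveats that the paper leaves implicit, but the underlying logic is identical.
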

 
  Recall that the minitwistor space $T$ is obtained by the gluing of $T^+$ and $T^-$ and we need to check that the transformations $\tilde{\phi}^{\prime +}_t$ and $\tilde{\phi}^{\prime -}_t$ coincide on the gluing part.

 \begin{prop}
 The transformations $\tilde{\phi}^{\prime +}_t$ and $\tilde{\phi}^{\prime -}_t$ are compatible with the gluing of $T^+$ and $T^-$, therefore, they induce a vector field on the minitwistor space $T$ denoted by $\phi_{t}^{\prime}$.
 \\
 \begin{proof}
 \normalfont
 The curves from the two families $C^+$ and $C^-$ may intersect each other at most in one point and this point lies in $\Lambda$ and the gluing is given by identifying curves that intersect each other in $\Lambda$. We proved in Lemma \ref{Lemma 5} that the flow maps the curves from each family to a curve which is also in that family of curves. What remains to prove is that the point of intersection is preserved, which is a consequence of the flow preserving $\Lambda$.
 \end{proof}
 \end{prop}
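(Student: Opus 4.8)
The plan is to verify compatibility with the gluing pointwise over the intersection locus in $\Sigma$, assembling the structures that $\phi_t$ (hence $\tilde\phi_t$) is already known to preserve: the null line bundle $\Lambda$ (part 1 of Definition~\ref{definition 3}), the null subbundles $U^{\pm}$ (Lemmas~\ref{Lemma 2} and~\ref{Lemma 4}), and the two families of horizontally lifted curves (Lemma~\ref{Lemma 5}, together with Proposition~\ref{Proposition 3} and the corollary above). Recall that $T=T^{+}\sqcup_{\thicksim}T^{-}$ identifies $t^{+}\in T^{+}$ with $t^{-}\in T^{-}$ precisely when there is a point $\sigma\in\Sigma$ with $t^{+}\cap t^{-}=\{\mathbb{P}(\Lambda)_{\sigma}\}$, equivalently when $\mathbb{P}(\Lambda)_{\sigma}=F^{+}_{\sigma}\cap F^{-}_{\sigma}$ and $t^{\pm}=l^{\pm}_{\sigma}$. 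So it suffices to show that if $(l^{+}_{\sigma},l^{-}_{\sigma})$ is a glued pair, then $\bigl(\tilde\phi^{\prime+}_{t}(l^{+}_{\sigma}),\tilde\phi^{\prime-}_{t}(l^{-}_{\sigma})\bigr)$ is again a glued pair.

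First I would record that, since $\tilde\phi_{t}$ covers $\phi_{t}$ and $\phi_{t}$ preserves $\Lambda$, the point $\mathbb{P}(\Lambda)_{\sigma}$ is carried to $\mathbb{P}(\Lambda)_{\phi_{t}(\sigma)}$, and since $\tilde\phi_{t}$ preserves $U^{\pm}$ (Lemma~\ref{Lemma 4}) this image point lies in $F^{+}_{\phi_{t}(\sigma)}\cap F^{-}_{\phi_{t}(\sigma)}$. Next, by Lemma~\ref{Lemma 3} the unique $C^{+}$-curve $c^{+}_{\sigma}$ through $\sigma$ is mapped by $\phi_{t}$ to the unique $C^{+}$-curve $c^{+}_{\phi_{t}(\sigma)}$ through $\phi_{t}(\sigma)$, and by Lemma~\ref{Lemma 5} $\tilde\phi_{t}$ sends horizontal lifts of $c^{+}_{\sigma}$ to horizontal lifts of $c^{+}_{\phi_{t}(\sigma)}$; passing to leaf spaces gives $\tilde\phi^{\prime+}_{t}(l^{+}_{\sigma})=l^{+}_{\phi_{t}(\sigma)}$, and likewise $\tilde\phi^{\prime-}_{t}(l^{-}_{\sigma})=l^{-}_{\phi_{t}(\sigma)}$. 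Combining the two observations, $\tilde\phi^{\prime+}_{t}(l^{+}_{\sigma})\cap\tilde\phi^{\prime-}_{t}(l^{-}_{\sigma})=\{\mathbb{P}(\Lambda)_{\phi_{t}(\sigma)}\}$, so the images form a glued pair; hence the map $\phi'_{t}$ on $T$ defined piecewise by $\tilde\phi^{\prime+}_{t}$ and $\tilde\phi^{\prime-}_{t}$ is well defined. Since each $\tilde\phi^{\prime\pm}_{t}$ is a (bi)holomorphism, being induced by the linear isomorphism $\tilde\phi_{t}$ of $V$, $\phi'_{t}$ is a one-parameter family of transformations of $T$, and differentiating at $t=0$ yields the asserted vector field.

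I expect the only genuine subtlety to be of a domain-of-definition nature rather than a conceptual one: the minitwistor space $T$ is built only after $\Sigma$ has been restricted so that curves from each family meet at most once and the leaf spaces are manifolds, so one must make sure the image of this restricted region under $\phi_{t}$ still enjoys those properties; this is automatic for $t$ sufficiently small because the conditions are open and $\phi_{0}=\mathrm{id}$, but it should be stated explicitly. A second, minor point is to check that $\phi'_{t}$ creates no spurious identifications: the parametrization $\sigma\mapsto(l^{+}_{\sigma},l^{-}_{\sigma})$ of the glued locus is $\phi_{t}$-equivariant by the computation above and $\phi_{t}$ is injective on $\Sigma$, so distinct points of $T$ stay distinct. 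Everything else is a direct consequence of Lemmas~\ref{Lemma 2}--\ref{Lemma 5} and Proposition~\ref{Proposition 3}.
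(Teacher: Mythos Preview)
Your proposal is correct and follows essentially the same route as the paper's proof: both arguments reduce compatibility with the gluing to the two facts that $\tilde\phi_t$ sends horizontal lifts of $C^{\pm}$-curves to horizontal lifts of $C^{\pm}$-curves (Lemma~\ref{Lemma 5}) and that the flow preserves $\Lambda$, hence the intersection locus $\mathbb{P}(\Lambda)_{\sigma}$. Your version is simply a more detailed unpacking of the same idea, with the added (and welcome) remarks about domain-of-definition issues for small $t$; one small notational slip is the identification ``$t^{\pm}=l^{\pm}_{\sigma}$'', since $t^{\pm}$ denote individual leaves in $T^{\pm}$ whereas $l^{\pm}_{\sigma}$ are projective lines of such leaves, but this does not affect the substance of your argument.
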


 \section{Properties of the vector field}\label{Section 4}
 
 Let $\tilde{X}$ be the vector field corresponding to the transformation $\tilde{\phi}^{\prime}_t$. In the previous section, we constructed a vector field on the minitwistor space arising from the construction in \cite{2}. Now by studying its properties, we will show that it gives a symmetry of the corresponding Einstein-Weyl space, which on the boundary $\mathcal{M}$ coincides with our initial symmetry of the conformal Cartan connection.

\begin{lemma}\label{Lemma 6}
 The transformation $\tilde{\phi}^{\prime}_{t}$ preserves the real structure on the minitwistor space.
 
 \begin{proof}
 \normalfont
 Recall that both the vector field $X$ and the real structure on the manifold $\Sigma$ were introduced by complexification from the underlying real manifold. As the real structure on the minitwistor space from \cite{2} was constructed using the real structure of this complexification, it is straightforward to show that the vector field $\tilde{X}$, which arises from $X$ preserves this real structure. 
 \end{proof}
\end{lemma}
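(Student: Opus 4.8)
The plan is to propagate the anti-holomorphic involution through every stage of the construction reviewed in \textsection\ref{subsection 2.2} and to check, stage by stage, that the transformation built in Section \ref{section 3} commutes with it; each of these checks is a naturality statement. Write $\tau_\Sigma$ for the real structure on $\Sigma$, $\tau_V$ for the real structure it induces on $V$, and $\tau_T$ for the real structure on $T$ constructed in \cite{2}. The base case is immediate: since $X$ is the complexification of the real vector field $Z$ on $\mathcal{M}$, on the domain near $\mathcal{M}$ where it is defined its flow satisfies $\tau_\Sigma\circ\phi_t=\phi_t\circ\tau_\Sigma$ for real $t$, i.e. $\phi_t$ preserves $\tau_\Sigma$.

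First I would lift this to $V$. As $V$ is obtained by complexifying the real bundle over $\mathcal{M}$, $\tau_V$ is the conjugate-linear involution covering $\tau_\Sigma$ which is fibrewise the complexification of the identity; and since $X$ preserves the Cartan connection, $(\phi_t)_*$ is fibrewise the complexification of a real linear map, hence commutes with the conjugate-linear $\tau_V$. Together with the base case this gives $\tau_V\circ\tilde{\phi}_t=\tilde{\phi}_t\circ\tau_V$ for the map $\tilde{\phi}_t$ of (\ref{1}). Next, $\tau_V$ interchanges the two null subbundles, $\tau_V(U^+)=U^-$ and $\tau_V(U^-)=U^+$, because at each point $U^\pm$ are the two conjugate null solutions of $\langle a,a\rangle=0$ on $\Lambda^0$, whereas by Lemma \ref{Lemma 4} the map $\tilde{\phi}_t$ preserves each of $U^+$ and $U^-$ separately; hence $\tau_V$ intertwines $\tilde{\phi}_t|_{U^+}$ with $\tilde{\phi}_t|_{U^-}$, so $\tilde{\phi}_t$ commutes with $\tau_V$ on $U^+\sqcup U^-$. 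Projectivising fibrewise, the same holds on $F^+\sqcup F^-$ for the induced involution and for the transformations $\tilde{\phi}_t^{\pm}$ of Proposition \ref{Proposition 3}.

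Then I would pass to the leaf spaces. The differential of $\tau_\Sigma$ interchanges the line subbundles $t^+$ and $t^-$, hence the curve families $C^+$ and $C^-$; moreover $\tau_V$ preserves $\mathcal{D}$ (again because it is the complexification of the real connection), so it sends horizontal lifts to horizontal lifts and therefore interchanges $\tilde{C}^+$ and $\tilde{C}^-$. Passing to quotients, $\tau_V$ descends to the interchange $T^+\leftrightarrow T^-$, which is exactly the real structure $\tau_T$ of \cite{2}, while by Lemma \ref{Lemma 5} and the ensuing corollary $\tilde{\phi}_t^{\pm}$ descend to $\tilde{\phi}^{\prime +}_t$ and $\tilde{\phi}^{\prime -}_t$; the commutation established one level below then descends to $\tau_T\circ\tilde{\phi}^{\prime +}_t=\tilde{\phi}^{\prime -}_t\circ\tau_T$. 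Finally the gluing $T=T^+\sqcup_\sim T^-$ is $\tau_T$-compatible and $\tilde{\phi}^{\prime}_t$ restricts to $\tilde{\phi}^{\prime \pm}_t$ on the two charts, so $\tau_T\circ\tilde{\phi}^{\prime}_t=\tilde{\phi}^{\prime}_t\circ\tau_T$ on all of $T$; for real $t$ this is precisely the assertion that $\tilde{\phi}^{\prime}_t$ preserves the real structure, equivalently that $\tilde{X}$ is a real holomorphic vector field.

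The main obstacle I expect is bookkeeping rather than conceptual content: one must keep track of the fact that $\tau$ swaps $U^+\leftrightarrow U^-$ and $T^+\leftrightarrow T^-$ rather than preserving each factor, so that the commutation identities exchange sides at each level, and — more importantly — one must confirm that the involution obtained by carrying $\tau_\Sigma$ through the construction coincides with the real structure $\tau_T$ that \cite{2} puts on $T$. This last point requires unwinding the definition of $\tau_T$ there, but once that identification is made every remaining step is just naturality of complexification, of projectivisation, of horizontal lift, and of passage to a leaf space.
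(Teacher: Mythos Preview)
Your proposal is correct and follows essentially the same approach as the paper: both argue that since $X$ and the real structure on $\Sigma$ arise from complexification of data on $\mathcal{M}$, and the real structure on $T$ is built from that of $\Sigma$, the induced transformation must preserve it. The paper merely asserts this is ``straightforward,'' whereas you have written out the naturality argument stage by stage; your version is a careful expansion of the paper's sketch rather than a different route.
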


\begin{theorem}{\label{90}}
The real holomorphic vector field $\tilde{X}$ on the minitwistor space corresponds to a symmetry $Y$ on the corresponding Einstein-Weyl manifold $M$.

\begin{proof}
\normalfont
In Section \ref{section 3}, we constructed the vector field $\tilde{X}$ and in Lemma \ref{Lemma 6}, we proved that it is in fact a real vector field. Therefore, as an immediate result of Theorem \ref{Theorem 2}, $\tilde{X}$ corresponds to a symmetry on $M$, which will be denoted by $Y$.
\end{proof}
\end{theorem}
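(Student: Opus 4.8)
The plan is to assemble the ingredients already in place rather than to prove something genuinely new. The first step is to confirm that the flow $\tilde\phi'_t$ built in Section \ref{section 3} is, for sufficiently small $t$, a local one-parameter group of biholomorphisms of $T$, so that it is honestly the flow of a holomorphic vector field $\tilde X$ on $T$. This is checked stage by stage: the holomorphic flow $\phi_t$ of $X$ on $\Sigma$ lifts to the holomorphic flow $\tilde\phi_t$ on the associated bundle $V$ through (\ref{1}); by Lemma \ref{Lemma 4} it restricts to $U^{\pm}$; by Proposition \ref{Proposition 3} it descends to the projectivizations $F^{\pm}=\mathbb P(U^{\pm})$; by Lemma \ref{Lemma 5} it descends further to the leaf spaces $T^{\pm}$; and by the final Proposition of Section \ref{section 3} the two descended flows agree on the gluing locus, hence define a single flow on $T$. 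Every operation here is holomorphic, so $\tilde X$ is a well-defined holomorphic vector field on $T$.

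The second step is the reality of $\tilde X$, which is precisely the content of Lemma \ref{Lemma 6}: since $X$ and the real structure on $\Sigma$ both arise by complexification from the underlying real-analytic surface $\mathcal M$, and the real structure on $T$ is the one induced from that complexification (as in Section \ref{subsection 2.2}), the flow $\tilde\phi'_t$ commutes with the real structure, so $\tilde X$ is a real vector field on $T$.

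The last step is to feed this into Theorem \ref{Theorem 2}: a real holomorphic vector field on the minitwistor space $T$ corresponds to a symmetry of the associated Einstein--Weyl manifold $(M,[g],\mathcal D)$. Applied to $\tilde X$, this produces the claimed symmetry $Y$, completing the argument. I do not expect a serious obstacle, since the statement is in effect a corollary of Section \ref{section 3}, Lemma \ref{Lemma 6} and Theorem \ref{Theorem 2}; the one point that deserves care is the patching of the successive local one-parameter groups into a bona fide local flow on all of $T$, in particular near the gluing set $\mathbb P(\Lambda)$ where the families $C^{+}$ and $C^{-}$ meet. If one additionally wants the boundary statement of the surrounding paragraph, it follows by tracking a point $\sigma\in\mathcal M$ through the construction: the intersecting pair of lines $l^{\pm}_{\sigma}$ is sent to $l^{\pm}_{\phi_t(\sigma)}$, so on the boundary $Y$ restricts to the flow of the symmetry $Z$ of the conformal Cartan connection we started with.
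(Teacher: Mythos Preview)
Your proposal is correct and follows essentially the same route as the paper: recall from Section~\ref{section 3} that $\tilde X$ is a holomorphic vector field on $T$, invoke Lemma~\ref{Lemma 6} for reality, and then apply Theorem~\ref{Theorem 2}. Your first paragraph merely unpacks the construction of Section~\ref{section 3} in more detail than the paper does, and your closing remark about the boundary behaviour anticipates Theorem~\ref{theorem 4} rather than being part of the present argument.
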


Starting from the vector field $Z$ on real analytic surface $\mathcal{M}$, which is the symmetry of Cartan connection, by complexification we obtained a vector field $X$ on the complex surface $\Sigma$. Furthermore, we constructed the vector field $\tilde{X}$ that as was proved in Thereom \ref{90} corresponds to the vector field $Y$ on the Einstein-Weyl manifold $M$, which has $\mathcal{M}$ as its boundary. Now we are ready to state the main result of the paper.

\begin{theorem}\label{theorem 4}
Let $\mathcal{M}$ be a real analytic surface with a conformal Cartan connection and a symmetry $Z$. Suppose that the vector bundle $V$ used in Definition \ref{definition 2} is an associated bundle of the tangent bundle $T\mathcal{M}$ and let $M$ be an Einstein-Weyl manifold constructed from $\mathcal{M}$ via construction from \cite{2}. Then there exists a vector field $Y'$ on the manifold with boundary $M\cup \mathcal{M}$ such that

\begin{equation}
    Y'|_{\mathcal{M}}=Z
\end{equation}

\begin{proof}
\normalfont
We take $Y'|_M=Y$. The line pairs $l^{\pm}_\sigma$ are preserved by the vector field $\tilde{X}$, which is a result of Lemma \ref{Lemma 5}. These intersecting line pairs correspond to points on $\Sigma$ and the real ones to points on $\mathcal{M}$. As a result $\tilde{X}$ induces a transformation on $\mathcal{M}$, which by definition is equal to $Z$ since $\tilde{X}$ was constructed using the vector field $X$ that is a  complexification of $Z$.
\end{proof}
\end{theorem}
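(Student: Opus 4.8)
The plan is to concatenate the constructions of Sections \ref{section 3} and \ref{Section 4} and then supply the one ingredient not yet in place, namely the behaviour at the boundary $\mathcal{M}$. First I would complexify the data: because $\mathcal{M}$ is real analytic and $Z$ is a symmetry of the conformal Cartan connection, the complexification procedure recalled in Section \ref{section 2} yields a complex surface $\Sigma$, a complexified conformal Cartan connection $(V,\langle\cdot,\cdot\rangle,\Lambda,\mathcal{D})$, and a holomorphic vector field $X$ on $\Sigma$ restricting to $Z$ on $\mathcal{M}$. The hypothesis that $V$ is associated to $T\mathcal{M}$ complexifies to the statement that $V$ is associated to $T\Sigma$, which is precisely what makes the lift $\tilde{\phi}_t$ of \eqref{1} available, so every result of Section \ref{section 3} applies.

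Next I would run Section \ref{section 3} in order: Lemmas \ref{Lemma 2}--\ref{Lemma 5}, Proposition \ref{Proposition 3}, and the corollary and proposition that follow show that $\phi_t$ lifts to $\tilde{\phi}_t$ on $V$, preserves $U^\pm$, the subbundles $t^\pm$, the curve families $C^\pm$ and their horizontal lifts, descends to compatible transformations $\tilde{\phi}^{\prime\pm}_t$ on $T^\pm$, and finally glues to a flow $\tilde{\phi}'_t$ on the minitwistor space $T$ with generator $\tilde{X}$. By Lemma \ref{Lemma 6} the flow $\tilde{\phi}'_t$ preserves the real structure of $T$, so $\tilde{X}$ is a real holomorphic vector field, and Theorem \ref{90} then produces a symmetry $Y$ of the associated Einstein-Weyl manifold $M$. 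I would set $Y'|_M := Y$.

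It then remains to define $Y'$ on the boundary and to check $Y'|_{\mathcal{M}}=Z$. Here I would use the description from \textsection{\ref{subsection 2.2}}: a point of the boundary is exactly an intersecting pair $l^+_\sigma \cap l^-_\sigma = \{\mathbb{P}(\Lambda)_\sigma\}$ of minitwistor lines with $\sigma$ in the real locus $\mathcal{M}\subset\Sigma$. By Lemma \ref{Lemma 5} the flow $\tilde{\phi}'_t$ carries $l^\pm_\sigma$ to $l^\pm_{\phi_t(\sigma)}$, by the gluing proposition it preserves the intersection point, and by Lemma \ref{Lemma 6} it preserves reality; hence it induces a one-parameter group of transformations on $\mathcal{M}$. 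Unwinding the correspondence ``intersecting line pair $\leftrightarrow$ point of $\Sigma$'', this induced flow on $\mathcal{M}$ is nothing but $\sigma\mapsto\phi_t(\sigma)$, i.e.\ the flow of $X|_{\mathcal{M}}=Z$; differentiating at $t=0$ gives a vector field on $\mathcal{M}$ which I would declare to be $Y'|_{\mathcal{M}}$, and by construction it equals $Z$.

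The step I expect to be the main obstacle is not any of the above bookkeeping but rather the assertion that $Y'|_M=Y$ and $Y'|_{\mathcal{M}}=Z$ genuinely patch into a single vector field --- continuous, and ideally smooth up to the boundary --- on the manifold-with-boundary $M\cup\mathcal{M}$. This requires the precise way $M$ is attached to $\mathcal{M}$ in \cite{2}: a collar of $\mathcal{M}$ in $M\cup\mathcal{M}$ is parametrised by minitwistor line pairs that are small deformations of the intersecting pairs $l^\pm_\sigma$. I would handle this by transporting the whole picture to $T$: since both $Y$ on $M$ and the induced flow on $\mathcal{M}$ are images of the one smooth (indeed holomorphic) object $\tilde{\phi}'_t$ under the continuous map sending a line pair to its point of $M\cup\mathcal{M}$, continuity of $Y'$ across $\mathcal{M}$ reduces to continuity of that parametrisation, which is part of the construction in \cite{2}; if more regularity were wanted, one would invoke the analyticity of both $\tilde{X}$ and the line-pair parametrisation of the collar to conclude that $Y'$ is as regular as the boundary atlas of $M\cup\mathcal{M}$ permits.
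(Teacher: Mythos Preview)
Your proposal is correct and follows essentially the same route as the paper: set $Y'|_M=Y$, use Lemma~\ref{Lemma 5} to see that $\tilde{X}$ preserves the intersecting line pairs $l^\pm_\sigma$, identify those pairs with points of $\Sigma$ (the real ones with $\mathcal{M}$), and conclude that the induced boundary flow is that of $X|_{\mathcal{M}}=Z$. Your discussion of continuity of $Y'$ across $\mathcal{M}$ is in fact more careful than the paper's own proof, which simply asserts the boundary value without addressing regularity.
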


\subsection*{Acknowledgements}

I would like to thank Aleksandra Bor\'owka for helpful discussions and valuable comments. This work was supported by Grants N16/MNS/000001 and N16/DBS/000006 and I would like to thank Institute of Mathematics of Jagiellonian University in Krakow for financial support.

JAGIELLONIAN UNIVERSITY IN KRAKOW, INSTITUTE OF MATHEMATICS, 30-348 KRAKOW.

{\it{E-mail address}}: \textbf{rouzbeh.mohseni@doctoral.uj.edu.pl}

\end{document}